\newtheorem{Theorem}{Theorem}
\newtheorem{theorem}[Theorem]{Theorem}
\newtheorem{corollary}[Theorem]{Corollary}
\newtheorem{proposition}[Theorem]{Proposition}
\newtheorem{remark}[Theorem]{Remark}
\newcommand{\R}{{\mathbb R}}
\newcommand{\eps}{\varepsilon}
\font\basic=cmr10
\begin{document}

\title[Liouville theorem and a priori estimates]{Liouville theorem \\ 
  and a priori estimates of radial solutions \\ for a non-cooperative elliptic system} 
\author{Pavol Quittner \\ \\ \box1 \\ \box2 \\ \box3}
\thanks{Supported in part by the Slovak Research and Development Agency 
        under the contract No. APVV-18-0308 and by VEGA grant 1/0339/21.} 
\date{}

\begin{abstract}
Liouville theorems for scaling invariant nonlinear elliptic
systems (saying that the system
does not possess nontrivial entire solutions)  
guarantee a priori estimates of solutions of
related, more general systems.
Assume that $p=2q+3>1$ is Sobolev subritical, $n\le3$ and $\beta\in\R$.
We first prove a Liouville theorem for the system
$$\left.\begin{aligned}
-\Delta u &=|u|^{2q+2}u+\beta|v|^{q+2}|u|^q u, \\
-\Delta v &=|v|^{2q+2}v+\beta|u|^{q+2}|v|^q v,
\end{aligned}\ \right\}
\quad\hbox{in}\quad \R^n,$$
in the class of radial functions $(u,v)$ 
such that the number of nodal domains of $u,v,u-v,u+v$ is finite.
Then we use this theorem to obtain a priori estimates
of solutions to related elliptic systems.
In the cubic case $q=0$, those solutions correspond 
to the solitary waves of a system of Schr\"odinger equations,
and their existence and multiplicity
have been intensively studied by various methods.
One of those methods is based on a priori estimates
of suitable global solutions of corresponding parabolic systems.
Unlike the previous studies, our Liouville theorem
yields those estimates for all $q\geq0$ which are Sobolev subcritical.

\vskip1cm
\medskip
\noindent \textbf{Keywords.} Liouville theorem, a priori estimate, elliptic system, Schr\"odinger equation

\medskip
\noindent \textbf{AMS Classification.}  35J10, 35J47, 35J61, 35B08, 35B45, 35B53, 35K58
\end{abstract}

\maketitle

\vfill\eject

\section{Introduction and main results}
\label{intro}

We are mainly interested in a priori estimates of radial solutions of the
problem
\begin{equation} \label{Fstat}
\begin{aligned}
&\left.\begin{aligned}
-\Delta u+\lambda u+\gamma v &=|u|^{2q+2}u+\beta|v|^{q+2}|u|^q u, \\
-\Delta v+\lambda v+\gamma u &=|v|^{2q+2}v+\beta|u|^{q+2}|v|^q v,
\end{aligned}\ \right\}
\quad\hbox{in}\quad \Omega, \\
&\ u=v=0 \hbox{ on }\partial\Omega\ \ \hbox{ if }\ \partial\Omega\ne\emptyset,
\end{aligned}
\end{equation}
where either $\Omega=B_R:=\{x\in\R^n:|x|<R\}$ or $\Omega=\R^n$, $n\leq3$, 
$\lambda,\gamma,\beta\in\R$,
$p:=2q+3\in(1,p_S)$,
and $p_S$ denotes the critical Sobolev exponent:
$$
p_S:=\left\{
  \begin{aligned}
    &\frac{n+2}{n-2},&\quad &\text{if $n\ge 3$,}\\
    &\infty,&\quad &\text{if $n\in\{1,2\}$.}
  \end{aligned}\right.
$$
In the cubic case $p=3$, solutions of \eqref{Fstat} correspond 
to the solitary waves of a system of Schr\"odinger equations 
and their existence and multiplicity
have been intensively studied by various (mainly variational) methods;
see the references in \cite{LW} or \cite{DTZ,ZW} if $\gamma=0$ or $\gamma\ne0$,
respectively.
The case $p\ne3$ has also been studied, see \cite{CS,CP} and the references
therein.

Topological and global bifurcation arguments often require
a priori estimates of solutions and such estimates have been obtained
for $n\leq3$, $p=3$ and positive solutions in \cite{BDW,DWW,DTZ}, for example,  
by proving and/or using suitable Liouville theorems for the related scaling invariant
problem
\begin{equation} \label{FstatL}
\left.\begin{aligned}
-\Delta u &=|u|^{2q+2}u+\beta|v|^{q+2}|u|^q u, \\
-\Delta v &=|v|^{2q+2}v+\beta|u|^{q+2}|v|^q v,
\end{aligned}\ \right\}
\quad\hbox{in}\quad \R^n,
\end{equation}
and the corresponding Dirichlet problem in a halfspace.

Another method of proving existence and multiplicity results for \eqref{Fstat}
is to consider the corresponding parabolic problem
\begin{equation} \label{Schr-lambda}
\begin{aligned}
&\left.
\begin{aligned}
u_t-\Delta u+\lambda u+\gamma v &=|u|^{2q+2}u+\beta|v|^{q+2}|u|^qu, \\
v_t-\Delta v+\lambda v+\gamma u &=|v|^{2q+2}v+\beta|u|^{q+2}|v|^qv,
\end{aligned}
\ \right\}
\ \hbox{ in } \Omega\times(0,\infty), \\
& u=v=0 \quad\hbox{ on }\partial\Omega\times(0,\infty)\ \ 
\hbox{ if }\ \partial\Omega\ne\emptyset, 
\end{aligned}
\end{equation}
and use the fact that (if $\gamma=0$, then)
the number of zeroes and intersections of radial solutions of
\eqref{Schr-lambda} is nonincreasing in time.
Such arguments
have been used in \cite{WW,LW} if $n\le3$, $p=3$, $\lambda>0=\gamma$,
and they again require a priori estimates
of suitable global solutions of \eqref{Schr-lambda}.

The arguments in the proofs of a priori estimates
in \cite{BDW,DWW,DTZ} or \cite{WW,LW} do not allow one to
cover the full subcritical range $p<p_S$ if $n=3$
or $2\leq n\leq3$, respectively
(see Remark~\ref{remTech} for more details).
The main result of this paper is a Liouville theorem
for radial solutions of \eqref{FstatL},
(possibly nonradial) solutions of  \eqref{FstatL} with $n=1$, 
and solutions of the problem
\begin{equation} \label{FstatL-half}
\begin{aligned}
&\left.
\begin{aligned}
-u_{xx}&= |u|^{2q+2}u+\beta|v|^{q+2}|u|^qu, \\
-v_{xx}&= |v|^{2q+2}v+\beta|u|^{q+2}|v|^qv,
\end{aligned}
\ \right\}
\ \hbox{ in } (0,\infty), \\
&\quad u(0)=v(0)=0
\end{aligned}
\end{equation}
(see Theorem~\ref{thmSchrL}).
Using that theorem we obtain the required a priori estimates
(for both \eqref{Fstat} and \eqref{Schr-lambda})
in the full subcritical range. 
In the case of \eqref{Schr-lambda} we will also assume
$p\geq3$ (i.e.~$q\geq0$) in order to avoid
some technical problems with local existence
and uniqueness of solutions (if $q<0$, then 
the nonlinearity in \eqref{Schr-lambda} is not Lipschitz continuous). 

To formulate our results more precisely, 
let us introduce some notation first.
By a nontrivial solution we understand
a solution $(u,v)$ such that $(u,v)\not\equiv(0,0)$.

If $J\subset \R$ is an interval and $v\in C(J,\R)$,
then we define 
$$ \begin{aligned}
  z(v)=z_J(v):=\sup\{j:&\,\exists x_1,\dots,x_{j+1}\in J,\
     x_1<x_2<\dots<x_{j+1},\\ 
    &v(x_i)\cdot v(x_{i+1})<0 \hbox{ for } i=1,2,\dots,j\},
\end{aligned}
$$
where $\sup(\emptyset):=0$. We usually refer to $z_J(v)$  as the \emph{zero
number} of $v$ in $J$. Note that $z_J(v)$ is actually the number of
sign changes of $v$; it coincides with the number of zeros of $v$ if 
$J$ is open, $v\in C^1(J)$ and all its zeros are simple. 
If $v:\R^n\to\R$ is a continuous, radially symmetric function,
i.e. $v(x)=\tilde v(|x|)$ for some $\tilde v\in C([0,\infty),\R)$,
then we define $z(v):=z(\tilde v)$.
Given $C_1,C_2,C_3,C_4\geq0$, set 
$$ 
\begin{aligned}
{\mathcal K}={\mathcal K}(C_1,C_2,C_3,C_4)&:=\{(u,v): z(u)\leq C_1,\ z(v)\leq C_2,\ 
          z(u-v)\leq C_3,\ z(u+v)\leq C_4\},\\
{\mathcal K^+}={\mathcal K^+}(C_3)&:=\{(u,v): u,v\geq0,\  z(u-v)\leq C_3\},\\
{\mathcal K}^*&:=\{(u,v)\in{\mathcal K}: u\not\equiv\pm v\},
\end{aligned}
$$ 
and notice that ${\mathcal K^+}\subset{\mathcal K}(0,0,C_3,0)$.

The following Liouville theorem has already been proved in \cite{BDW} 
in the case of nonnegative solutions, $\beta<1$ and $p=3$.
Notice also that if $\beta\in(-1,\infty)$ or $\beta>0$ and one considers nonnegative solutions, 
then the nonexistence of nontrivial (radial and nonradial) solutions 
to problems occurring in the following theorem has been studied in
\cite{QS9,DW} or \cite{RZ}, respectively.

\begin{theorem} \label{thmSchrL}
Assume $n\leq3$ and $p=2q+3\in(1,p_S)$. Let $C_1,C_2,C_3,C_4\geq0$ be fixed.
If $\beta\ne-1$, then system \eqref{FstatL} does not possess 
nontrivial classical radial solutions satisfying $(u,v)\in{\mathcal K}$ 
and system \eqref{FstatL} with $n=1$ does not possess 
nontrivial classical solutions satisfying $(u,v)\in{\mathcal K}$.
If $\beta=-1$, then 
all classical radial solutions of \eqref{FstatL} satisfying $(u,v)\in{\mathcal K}$ 
and all classical solutions of system \eqref{FstatL} with $n=1$
satisfying $(u,v)\in{\mathcal K}$
are of the form $(c,\pm c)$, where $c\in\R$. 
Problem \eqref{FstatL-half} does not possess 
nontrivial classical solutions satisfying $(u,v)\in{\mathcal K}$ for any $\beta\in\R$.
\end{theorem}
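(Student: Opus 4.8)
The plan is to reduce all three statements to ODE systems. Writing $r=|x|$ in the radial and half-line cases, and keeping $x\in\R$ for $n=1$, any solution solves
$$ u''+\tfrac{n-1}{r}u'+f(u,v)=0,\qquad v''+\tfrac{n-1}{r}v'+g(u,v)=0, $$
with $f(u,v)=|u|^qu(|u|^{q+2}+\beta|v|^{q+2})$, $g(u,v)=|v|^qv(|v|^{q+2}+\beta|u|^{q+2})$, supplemented by $u'(0)=v'(0)=0$ in the radial case (no damping term when $n=1$) and by $u(0)=v(0)=0$ for \eqref{FstatL-half}. The system is invariant under the sign changes $(u,v)\mapsto(\pm u,\pm v)$ and the reflection $(u,v)\mapsto(v,u)$; since $(u,v)\in{\mathcal K}$ forces each of $u,v,u-v,u+v$ to have constant sign on some $[R_0,\infty)$, after applying a symmetry I may assume $u\ge v\ge0$ there. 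Two identities then drive everything. Putting $F(u,v):=\tfrac1{2(q+2)}(|u|^{2q+4}+|v|^{2q+4})+\tfrac{\beta}{q+2}|u|^{q+2}|v|^{q+2}$, so that $(f,g)=\nabla F$ and $F$ is homogeneous of degree $p+1$, the energy $E(r):=\tfrac12(u'^2+v'^2)+F(u,v)$ satisfies $E'(r)=-\tfrac{n-1}{r}(u'^2+v'^2)\le0$ (so $E$ is nonincreasing, and constant for $n=1$), while the Pohozaev functional $P(r):=r^nE(r)+\tfrac{n-2}{2}r^{n-1}(uu'+vv')$ satisfies, by Euler's identity $uf+vg=(p+1)F$, the relation $P'(r)=\theta\,r^{n-1}F(u,v)$ with $\theta=\tfrac12(n+2-(n-2)p)$; here $\theta>0$ is exactly the hypothesis $p<p_S$, $n\le3$. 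Finally, with $a=|u|^{q+2}$, $b=|v|^{q+2}$ one has $2(q+2)F=a^2+2\beta ab+b^2=(a+\beta b)^2+(1-\beta^2)b^2$, so $F\ge0\iff\beta\ge-1$, with equality (the case $\beta=-1$) precisely on $\{|u|=|v|\}$, and a short computation shows that for every $\beta\ne-1$ the origin is the only critical point of $F$.

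For $\beta>-1$ (in these ranges one may also simply invoke the nonexistence results of \cite{QS9,DW}) I would argue: $F\ge0$ is coercive, so $E\ge0$ and the solution is bounded (for $n=1$ from conservation of $E$; otherwise because $u$ is superharmonic in the stabilised sector $u\ge v\ge0$, which already excludes $u(r)\to\infty$), $E(r)\downarrow E_\infty\ge0$, and $\int_1^\infty r^{-1}(u'^2+v'^2)\,dr<\infty$ when $n\ge2$. A nonconstant recurrent orbit in the $\omega$-limit set of the limiting autonomous system would make this integral diverge (for $n=1$, Poincar\'e recurrence forces the orbit to wind around $0$), producing infinitely many sign changes and contradicting ${\mathcal K}$; hence $(u(r),v(r))$ tends to the only critical point of $F$, namely $0$, and $E_\infty=0$. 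For $n=1$ this already gives $E\equiv0$, so $F\equiv0$ and the solution is constant, hence trivial; for $n\ge2$, $P$ is nondecreasing with $P(0)=0$ and, by the standard decay estimates for solutions tending to zero, $P(r)\to0$, so $\int_0^\infty r^{n-1}F\,dr=0$ and $(u,v)\equiv(0,0)$. The case $\beta=-1$ is identical up to the last step: now the limit lies on $\{|u|=|v|\}$, and if it is a point $(c,\pm c)$ with $c\ne0$, linearising the system there shows that $u\mp v$ obeys a linear centre to leading order, so $z(u\mp v)<\infty$ forces $u\equiv\pm v$, which makes $u$ harmonic — hence constant in the radial and half-line cases (affine for $n=1$) — while $c=0$ again leads to the trivial solution via the Pohozaev argument. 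Problem \eqref{FstatL-half} with $\beta\ge-1$ is the simplest case, since then $E\equiv\tfrac12(u'(0)^2+v'(0)^2)$ together with $F\ge0$ forces $E=0$ and hence $(u,v)\equiv(0,0)$.

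The genuine obstacle is $\beta<-1$, where $F$ is indefinite, $E$ need not be bounded below, the solution is not a priori bounded, and neither the coercivity argument nor the ``$\omega$-limit is a critical point'' mechanism applies directly. The plan is to dichotomise according to whether the orbit eventually stays in the cone $\{F>0\}$ — there (inside the sector $u\ge v\ge0$) one has $u^{q+2}+\beta v^{q+2}\asymp u^{q+2}$, so each component solves, up to bounded positive factors, a scalar subcritical Lane--Emden-type equation $-\Delta w\sim|w|^{p-1}w$, on which the classical decay/blow-up dichotomy applies — or else $E$ becomes $\le0$ at some point, which traps the orbit in the cone $\{F\le0\}=\{|u|\asymp|v|\}$, where the system is ``almost diagonal'' and $u\mp v$ is governed by an equation that is a centre if $u,v$ converge and rapidly oscillating (frequency $\to\infty$) if they grow, so that $u\mp v$ changes sign infinitely often unless $u\equiv\pm v$, and $u\equiv\pm v$ then forces $\Delta u=c|u|^{p-1}u$ with $c>0$, which blows up in finite $r$ unless $u\equiv0$; the remaining possibility, that the orbit crosses the ray $\{F=0\}$ (and the ray $\{u^{q+2}+\beta v^{q+2}=0\}$, where a component switches between super- and sub-harmonicity) infinitely often, has to be ruled out using the finiteness of all four numbers $z(u),z(v),z(u-v),z(u+v)$ simultaneously, after which $P'=\theta r^{n-1}F$ and a Rellich--Pohozaev identity on large balls close the proof. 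I expect the two hardest points to be (i) a quantitative barrier argument excluding ``grow-up at $r=\infty$'' for $\beta<-1$ — the easy superharmonicity only controls the sub-sector where $u^{q+2}\ge|\beta|\,v^{q+2}$ — and (ii) showing that a bounded, non-decaying orbit must violate one of the four zero-number bounds, which requires a careful description of the flow near the degenerate equilibrium $0$ and along the invariant cones $\{F=0\}$ and $\{u^{q+2}+\beta v^{q+2}=0\}$.
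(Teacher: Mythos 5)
Your proposal is, on the whole, a sketch rather than a proof, and the gaps become fatal precisely where the theorem is hard.

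For the ``easy'' ranges you follow a plausible dynamical-systems route (monotone energy $E$, Pohozaev functional $P$ with $P'=\theta r^{n-1}F$, $\omega$-limit is a critical point of $F$), and the Pohozaev computation giving $\theta=\tfrac12(n+2-(n-2)p)>0$ is correct. But even here there are holes. Your boundedness claim for $\beta>-1$, $n\le2$ rests on ``$u$ superharmonic in the sector $u\ge v\ge0$ excludes $u\to\infty$''; that implication is false in dimensions $1$ and $2$ (think $u(x)=|x|$, $u(x)=\log|x|$), so you have no a priori bound there. The paper avoids this entirely: for $n\le2$ (and $n=3$, $p\le3$) it notes that $w:=u-v$ (if $\beta\le0$) or $w:=u$ satisfies the differential inequality $-\Delta w\ge w^p$ on an exterior domain and quotes the Bidaut-V\'eron--Pohozaev nonexistence theorem, which kills those cases in one line. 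Similarly, your LaSalle-type ``$\omega$-limit is $\{0\}$'' step is asserted rather than proved; the paper instead first reduces to bounded solutions via the doubling lemma, uses the already-established $n=1$ result plus scaling to get the universal decay $|u|+|v|+r(|u'|+|v'|)\le C r^{-2/(p-1)}$, and then the Rellich--Pohozaev identity with $G\asymp|\mathcal U|^{p+1}$ (valid when $\beta>-1$) closes that case rigorously.

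The genuine gap is exactly the one you flag yourself: $\beta<-1$. What you offer there is a plan with two ``hardest points'' that you say you expect to be hard, not an argument. Your proposed dichotomy (orbit eventually in $\{F>0\}$ vs.\ $E\le0$ somewhere, plus a third ``infinitely many crossings'' branch to be ruled out via the four zero numbers) is not developed, and in particular the two sub-claims that would carry it --- a barrier excluding grow-up when only part of the cone gives superharmonicity, and the statement that a bounded non-decaying orbit must violate one of the zero-number bounds --- are the whole theorem. The paper's treatment of $\beta\le-1$ (for $n=3$, $p\in(3,5)$, the only range left after the exterior-Liouville step) is structurally different: it passes to Emden variables $U=r^{2/(p-1)}u$, $V=r^{2/(p-1)}v$, derives the autonomous-looking ODE $r^2Z''+arZ'-bZ+F(Z)=0$ and an associated energy identity $\tfrac12 r^2\varphi'+ar\varphi+E'=0$, proves successively that $V'<0$ eventually, $V\to0$, and $V+r|V'|\to0$, then splits into Case A (where $U$ becomes uniformly small on some dyadic interval, giving $E(\tilde R_k)\to0$ and $\tilde R_k^2\varphi(\tilde R_k)\to0$) and Case B (where $U$ is bounded below along a sequence; a blow-down limit combined with the Gidas--Spruck/Serrin--Zou classification of positive solutions of $\Delta u+u^p=0$ on $(0,\infty)$ forces $\tilde u(\rho)=b^{1/(p-1)}\rho^{-2/(p-1)}$ and hence $E\to E_\infty<0$), and in both cases feeds the resulting boundary data into the integrated energy identity from $0$ to $\tilde R$ to get $\int_0^\infty s\varphi\,ds\le0$, a contradiction. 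None of this --- neither the Emden change of variables, nor the decay of $V$, nor the Case~A/Case~B split on $U$, nor the appeal to the classification of exterior Lane--Emden solutions --- appears in your plan, and without an argument of comparable strength the theorem is not established for $\beta\le-1$.

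Finally, two smaller points: your reduction to ``$u\ge v\ge0$ eventually'' implicitly uses that none of $u$, $v$, $u\pm v$ vanishes identically; the paper justifies this via uniqueness results of Ni and of Quittner (\cite{Ni}, \cite{Q-JDE}) for the decoupled/diagonal reductions, and that step deserves a citation or proof rather than being folded into ``apply a symmetry''. And in the $\beta=-1$ case your linearisation-about-$(c,\pm c)$ argument needs the convergence $({\mathcal U},{\mathcal U}')\to((c,\pm c),0)$ to be quantitative enough that the Sturm-type oscillation estimate for $u\mp v$ actually produces more than $C_3$ (resp.\ $C_4$) sign changes; as written this is only heuristic.
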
 

Theorem~\ref{thmSchrL} combined with scaling and doubling arguments from
\cite{PQS1}, and an argument due to \cite{B12} (based on the Sturm
comparison theorem) yield the following result:

\begin{theorem} \label{thmUB}
Assume $\Omega=\R^n$ or $\Omega=B_R$, $n\leq3$, $\lambda,\gamma\in\R$ and $p=2q+3\in(1,p_S)$.
Let $C_1,C_2,C_3,C_4\geq0$ be fixed. Let $B$ be a compact set in
$\R\setminus\{-1\}$ and $B^*$ be a compact set in $\R$.
Then there exists $C$ such that any classical radial solution $(u,v)\in{\mathcal K}$ of \eqref{Fstat}
with $\beta\in B$, and any classical radial solution $(u,v)\in{\mathcal K}^*$ of \eqref{Fstat} with $\beta\in B^*$ 
satisfies $\|(u,v)\|_\infty\leq C$.
\end{theorem}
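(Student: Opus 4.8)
The plan is to derive Theorem~\ref{thmUB} from the Liouville theorem (Theorem~\ref{thmSchrL}) by the standard rescaling-and-doubling machinery of Pol\'a\v{c}ik--Quittner--Souplet~\cite{PQS1}, adapted to radial solutions in the class $\mathcal K$. Suppose the conclusion fails: then there is a sequence of parameters $\beta_k\in B$ (or $\beta_k\in B^*$) and classical radial solutions $(u_k,v_k)\in\mathcal K$ (resp.\ $\in\mathcal K^*$) of \eqref{Fstat} with $M_k:=\|(u_k,v_k)\|_\infty\to\infty$. Introduce the ``error function'' $w_k:=\left(|u_k|^2+|\nabla u_k|^{2/(q+3/2)}+|v_k|^2+|\nabla v_k|^{2/(q+3/2)}\right)^{1/2}$ (or simply work with $|u_k|+|v_k|$ together with the gradient term using the scaling exponent $\alpha:=1/(p-1)=1/(2q+2)$), which blows up somewhere in $\overline\Omega$; apply the doubling lemma from~\cite{PQS1} to produce points $x_k$ with $w_k(x_k)\to\infty$ and $w_k(y)\le 2w_k(x_k)$ for all $y$ within distance $\lambda_k:=M_k^{-(p-1)/2}w_k(x_k)^{-(p-1)/2}$ (the precise scaling) of $x_k$. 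Then rescale: set $\tilde u_k(y):=\mu_k^{2/(p-1)}u_k(x_k+\mu_k y)$, $\tilde v_k(y):=\mu_k^{2/(p-1)}v_k(x_k+\mu_k y)$ with $\mu_k\to0$ chosen so that the rescaled error function equals $1$ at the origin. The linear terms $\lambda u+\gamma v$ and $\lambda v+\gamma u$ scale by $\mu_k^2\to0$ and disappear in the limit.

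The next step is a compactness/convergence argument: by elliptic estimates (interior $W^{2,r}$ and Schauder bounds, valid since the rescaled solutions and gradients are locally bounded by construction, and the nonlinearity has the fixed subcritical power $p<p_S$), along a subsequence $(\tilde u_k,\tilde v_k)\to(U,V)$ in $C^1_{loc}$ of either $\R^n$ or a halfspace $\{x\cdot\nu>0\}$ (the latter occurs when $\mathrm{dist}(x_k,\partial B_R)/\mu_k$ stays bounded), and $\beta_k\to\beta_\infty\in B$ (resp.\ $\in B^*$). The limit $(U,V)$ is a nontrivial classical solution of \eqref{FstatL} on $\R^n$ or of the Dirichlet problem on a halfspace, with $\beta_\infty\ne-1$ (resp.\ $\beta_\infty\in\R$), and it is radial whenever the $x_k$ stay bounded relative to $\mu_k$ (so the blow-up sits around the origin) — but the genuinely new ingredient here, and the one I expect to be the main obstacle, is to control the geometry/type of the limit when $|x_k|/\mu_k\to\infty$: in that case one a priori only gets a translation-invariant limit that need not be radial and need not live on a halfspace. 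This is precisely where the argument of~\cite{B12} enters: because the solutions are radial in $x\in\R^n$ with $n\le3$ and lie in the fixed class $\mathcal K$, the zero numbers $z(u_k)$, $z(v_k)$, $z(u_k\mp v_k)$ are uniformly bounded; the Sturm comparison theorem then forces the blow-up point(s) to stay within a bounded number of ``nodal rings'', so after rescaling around a point with $|x_k|\to\infty$ the one-dimensionality of the radial Laplacian $\partial_{rr}+\frac{n-1}{r}\partial_r$ degenerates to $\partial_{rr}$ and the limit $(U,V)$ solves the one-dimensional system \eqref{FstatL} with $n=1$ (on $\R$ or, if the blow-up approaches $\partial B_R$, the halfline problem \eqref{FstatL-half}).

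Finally, one must check that the limit $(U,V)$ inherits membership in $\mathcal K$: the zero numbers cannot increase in the limit because sign changes of $\tilde u_k,\tilde v_k,\tilde u_k\mp\tilde v_k$ on any fixed compact set are bounded by $C_1,\dots,C_4$ for all $k$ (rescaling does not create sign changes), and $C^1_{loc}$ convergence with the eventual simplicity of zeros (which follows from $(U,V)\not\equiv 0$ and the ODE/unique continuation) shows $z(U)\le C_1$, etc.; in the $\mathcal K^*$ case one also verifies $U\not\equiv\pm V$ by noting that $u_k\equiv\pm v_k$ is excluded and that the normalization of the rescaled error function prevents the limit from collapsing onto the diagonal/antidiagonal when $\beta_\infty=-1$ (this uses that on the diagonal the system reduces to a scalar equation whose nontrivial entire/halfspace solutions with bounded zero number are ruled out by the scalar Liouville theorem, and otherwise the component not proportional to the other survives the limit). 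In every case Theorem~\ref{thmSchrL} now yields a contradiction — a nontrivial solution in $\mathcal K$ of \eqref{FstatL} (radial, $n$ general, or $n=1$), of \eqref{FstatL-half}, or a solution of the form $(c,\pm c)$ with $c\ne0$, which is impossible since bounded-gradient rescaled limits have $U(0)^2+V(0)^2+|\nabla U(0)|^{\cdots}+\cdots=1$ yet a nonzero constant is not a solution of the corresponding Dirichlet problem and, on $\R^n$, is excluded from $\mathcal K^*$. This contradiction proves the claimed uniform bound $\|(u,v)\|_\infty\le C$. \qed
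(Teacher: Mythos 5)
Your overall scaffolding (contradiction, doubling lemma, rescaling, passing to a nontrivial limit of \eqref{FstatL}, \eqref{FstatL} with $n=1$, or \eqref{FstatL-half}, and invoking Theorem~\ref{thmSchrL}) matches the paper's strategy. But there is a genuine gap: you have misplaced the role of the Sturm-comparison argument of \cite{B12}, and as a consequence the case $\beta_k\to-1$ with interior blow-up and $(u_k,v_k)\in{\mathcal K}^*$ is not actually handled.

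The reduction to the $n=1$ system in the regime $|x_k|/\mu_k\to\infty$ requires no Sturm comparison at all and has nothing to do with nodal rings: once you rescale by $\mu_k$ around $r_k$ with $r_k/\mu_k\to\infty$, the radial Laplacian $\partial_{rr}+\tfrac{n-1}{r}\partial_r$ becomes $\partial_{\rho\rho}+\tfrac{n-1}{r_k/\mu_k+\rho}\partial_\rho$, and the first-order coefficient vanishes in the limit. The place where \cite{B12} is genuinely needed is different. When $\beta_k\to-1$ and the blow-up is interior (your Cases A/B), Theorem~\ref{thmSchrL} does \emph{not} give a contradiction; it tells you precisely that the limit is $(\tilde u,\tilde v)=(c,\pm c)$ with $c=2^{-2/(p-1)}\ne0$ fixed by the normalization. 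So the limit \emph{does} collapse onto the diagonal/antidiagonal even though $u_k\not\equiv\pm v_k$ for each $k$, and your claim that ``the component not proportional to the other survives the limit'' is false. To conclude in the ${\mathcal K}^*$ case one must instead look at $\tilde w_k:=\tilde u_k-\tilde v_k\not\equiv0$, which satisfies a linear second-order ODE $\tilde w_k''+P_k\tilde w_k'+Q_k\tilde w_k=0$; since $\beta_k\to-1$ and $\tilde u_k,\tilde v_k\to c$ locally uniformly, the transformed potential $q_k=Q_k-\tfrac12 P_k'-\tfrac14 P_k^2$ tends to a constant $>1/2$ on arbitrarily long intervals, and Sturm comparison with $W''+\tfrac12 W=0$ forces $z(\tilde w_k)>C_3$, contradicting $(u_k,v_k)\in{\mathcal K}^*$. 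This is the content of the \cite{B12} argument; without it your proof only shows a uniform bound for $\beta$ in compact sets avoiding $-1$, i.e. the $B$ part but not the $B^*$ part of the theorem. (Your remark that a nonzero constant fails the Dirichlet condition does dispose of Case C, and Theorem~\ref{thmSchrL} disposes of $\beta_\infty\ne-1$; it is only the interior case with $\beta_\infty=-1$ that needs the extra input.)

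A minor secondary point: the error function and doubling radius you wrote down have non-standard exponents; the paper uses $M(u,v)=|u|^{(p-1)/2}+|v|^{(p-1)/2}+|u'|^{(p-1)/(p+1)}+|v'|^{(p-1)/(p+1)}$, which scales like reciprocal distance, and the doubling lemma then gives $M\le 2M_k$ on a ball of radius $k/M_k$. Your hedged parenthetical (``the precise scaling'') suggests you were aware this needed checking, and it does not affect the structure, but it should be fixed.
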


The proof of Theorem~\ref{thmUB} shows that this theorem remains true for solutions
of large classes of systems which are perturbations
of the scaling invariant system \eqref{FstatL}.
In particular, the estimate $\|(u,v)\|_\infty\leq C$ in Theorem~\ref{thmUB}
is locally uniform with respect to $\lambda$ and $\gamma$.

A straightforward modification of the proof of Theorem~\ref{thmUB}
(cf.~\cite{PQS1}) also guarantees universal singularity estimates.
More precisely, if $\Omega:=B_R\setminus\{0\}$, $R>2$, and $p,\lambda,\gamma,B,B^*$ are as in Theorem~\ref{thmUB},
then there exists $C>0$ such that any classical radial solution $(u,v)\in{\mathcal K}$ of
the system of PDEs in \eqref{Fstat} with $\beta\in B$, 
and any classical radial solution $(u,v)\in{\mathcal K}^*$ of the system of
PDES in \eqref{Fstat} with $\beta\in B^*$
satisfies the estimate
$$|u(x)|+|v(x)|\leq C|x|^{-2/(p-1)}, \qquad0<|x|<1.$$
(The solution $(u,v)$ need not satisfy the boundary condition in \eqref{Fstat}.)

Theorem~\ref{thmSchrL} and \cite{Q-LSP} 
guarantee that the related scaling invariant parabolic problem
\begin{equation} \label{Schr-Liouv}
\left.
\begin{aligned}
u_t-\Delta u &=|u|^{2q+2}u+\beta|v|^{q+2}|u|^qu, \\
v_t-\Delta v &=|v|^{2q+2}v+\beta|u|^{q+2}|v|^qv,
\end{aligned}
\ \right\}
\ \hbox{ in } \R^n\times\R,
\end{equation}
does not possess nontrivial radial solutions
satisfying $(u,v)(\cdot,t)\in{\mathcal K}$ for all $t\in\R$,
and problems \eqref{Schr-Liouv} with $n=1$ and
\begin{equation} \label{Schr-bdry}
\begin{aligned}
&\left.
\begin{aligned}
u_t-u_{xx} &= |u|^{2q+2}u+\beta|v|^{q+2}|u|^qu, \\
v_t-v_{xx} &= |v|^{2q+2}v+\beta|u|^{q+2}|v|^qv,
\end{aligned}
\ \right\}
\ \hbox{ in } (0,\infty)\times\R, \\
&\quad u=v=0 \ \hbox{ on } \{0\}\times\R,
\end{aligned}
\end{equation}
do not possess nontrivial solutions
satisfying $(u,v)(\cdot,t)\in{\mathcal K}$ for all $t\in\R$.
These parabolic Liouville theorems together with scaling and doubling arguments in \cite{PQS}
immediately imply the following universal $L^\infty$-estimate for global solutions of \eqref{Schr-lambda}
(see \cite[Corollary~5]{PQS} for a more general statement):  

\begin{corollary} \label{corS}
Assume $\Omega=\R^n$ or $\Omega=B_R$, $n\leq3$, 
$\beta\ne-1$ and $p=2q+3\in(1,p_S)$. 
Then there exists $C>0$ such that any global radial classical solution of \eqref{Schr-lambda}
with $(u,v)(\cdot,t)\in{\mathcal K}$
for all $t\in(0,\infty)$ satisfies the following estimate:
$$ \|(u,v)(\cdot,t)\|_\infty\leq C(1+t^{-1/(p-1)}),\quad t\in(0,\infty).$$  
\end{corollary}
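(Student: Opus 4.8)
The plan is to derive the estimate from the parabolic Liouville theorems for \eqref{Schr-Liouv} and \eqref{Schr-bdry} via the rescaling–doubling machinery of \cite{PQS}. First I would observe that, thanks to Theorem~\ref{thmSchrL} and the results of \cite{Q-LSP} invoked just above the statement, the scaling invariant parabolic system \eqref{Schr-Liouv} (in the radial class, for $n\le 3$) and the boundary problem \eqref{Schr-bdry} (for $n=1$) admit no nontrivial entire solutions with $(u,v)(\cdot,t)\in\mathcal K$ for all $t\in\R$; this is exactly the hypothesis required to apply the abstract rescaling result. Since $\beta\ne-1$, the exceptional family $(c,\pm c)$ of the elliptic theorem does not survive in the parabolic setting (these would force $u_t=\lambda$-type growth contradicting boundedness on $\R$), so the Liouville alternative we need holds outright.

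Next I would set up the doubling argument. Introduce $M(x,t):=|u(x,t)|+|v(x,t)|$, let $m(p):=2/(p-1)$, and consider the rescaled quantity $M(x,t)^{1/m(p)}$ together with its parabolic analogue incorporating the time variable. Suppose the desired estimate fails: then there is a sequence of global radial solutions $(u_k,v_k)$ on $\Omega_k$ (either $\R^n$ or $B_{R_k}$) and points $(x_k,t_k)$ with $t_k>0$ such that the scaled norm blows up faster than $1+t_k^{-1/(p-1)}$. Apply the parabolic doubling lemma (Lemma-type statement in \cite{PQS}) to extract new points and a scaling parameter $\lambda_k\to 0$; rescale the solutions by
$$
(\tilde u_k,\tilde v_k)(y,s):=\lambda_k^{2/(p-1)}(u_k,v_k)(x_k+\lambda_k y,\,t_k+\lambda_k^2 s).
$$
The scaling invariance of the principal part of \eqref{Schr-lambda} — the nonlinearities are homogeneous of degree $p=2q+3$, and the lower order terms $\lambda u+\gamma v$, $\lambda v+\gamma u$ scale away like $\lambda_k^2\to 0$ — ensures that, after passing to the limit (using parabolic regularity estimates for the bootstrap, which is where $p<p_S$ enters), the $(\tilde u_k,\tilde v_k)$ converge to a nontrivial bounded entire solution of either \eqref{Schr-Liouv} or \eqref{Schr-bdry}. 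One must check the limit stays in $\mathcal K$: the zero number $z$ of each of $u,v,u-v,u+v$ is nonincreasing in time along solutions of the parabolic system and is preserved (not increased) under the local uniform convergence, so the bounds $C_1,\dots,C_4$ pass to the limit. This contradicts the parabolic Liouville theorem, proving the estimate.

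I expect the main obstacle to be the bookkeeping in the doubling/rescaling step — specifically, verifying that the rescaled solutions retain the membership $(\tilde u_k,\tilde v_k)(\cdot,s)\in\mathcal K$ for all relevant $s$ with \emph{uniform} constants, and that the limiting solution is genuinely defined for all $t\in\R$ (two-sided in time, which forces the Liouville hypothesis to be for entire solutions rather than merely ancient or global-in-forward-time ones). The zero-number monotonicity handles the $\mathcal K$-preservation cleanly, and the two-sided time existence follows from the doubling lemma placing the base point $t_k$ far (on the parabolic scale $\lambda_k^2$) from both $0$ and $+\infty$; but these points need to be stated carefully, and one also needs the requirement $q\ge 0$ (i.e.\ $p\ge 3$) to guarantee the local well-posedness and regularity needed for the compactness argument, exactly as flagged in the introduction. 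The remainder — the precise form $C(1+t^{-1/(p-1)})$ rather than a cruder bound — comes for free from the quantitative output of the doubling lemma, as in \cite[Corollary~5]{PQS}, so I would simply cite that and not reproduce the computation.
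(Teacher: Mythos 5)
Your proof takes essentially the same route as the paper: Theorem~\ref{thmSchrL} together with \cite{Q-LSP} give the parabolic Liouville theorems for \eqref{Schr-Liouv} and \eqref{Schr-bdry}, and then the scaling/doubling machinery of \cite{PQS} (specifically \cite[Corollary~5]{PQS}) yields the universal estimate. The paper itself gives no more detail than this citation, so your sketch of the doubling step is a reasonable reconstruction.

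Two small points are off, though neither is fatal. First, you assert that $q\ge 0$ (i.e.\ $p\ge 3$) is needed; but the Corollary is stated for the whole subcritical range $p\in(1,p_S)$, including $q<0$. The Lipschitz issue flagged in the introduction concerns well-posedness/uniqueness for \emph{Proposition}~\ref{propCL}, not the a priori estimate of the Corollary: once one has a classical solution, the parabolic interior regularity used in the compactness step needs only local boundedness and continuity of the nonlinearity, not Lipschitz continuity. Second, your heuristic that the constants $(c,\pm c)$ ``would force $u_t=\lambda$-type growth'' is incorrect — a constant pair is a perfectly good entire solution of the parabolic problem \eqref{Schr-Liouv} when $\beta=-1$. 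The reason the exceptional family does not interfere is simply that the Corollary assumes $\beta\ne-1$, which rules out $(c,\pm c)$ with $c\ne 0$ in the elliptic theorem and hence (via \cite{Q-LSP}) gives a clean parabolic nonexistence statement. Relatedly, you invoke zero-number monotonicity in time to pass $\mathcal K$ to the limit; but the Corollary already assumes $(u,v)(\cdot,t)\in\mathcal K$ for every $t$, so what is actually needed is only that zero numbers are invariant under the spatial rescaling and lower semicontinuous under locally uniform convergence — monotonicity in time (which in any case requires $\gamma=0$, an assumption not made in the Corollary) is not used.
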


The constant $C=C(\beta,\lambda,\gamma)$ in Corollary~\ref{corS} is locally uniform for 
$\beta\in\R\setminus\{0\}$ and $\lambda,\gamma\in\R$.
Notice also that ${\mathcal K}$ or ${\mathcal K}^+$ is invariant
with respect to the semiflow generated by \eqref{Schr-lambda} if $\gamma=0$
or $\gamma\leq0$, respectively. 

Corollary~\ref{corS} can be used to prove
the following uniform $H^1$-estimate for global radial solutions 
of \eqref{Schr-lambda} with bounded energy and initial data in $H^1\cap{\mathcal K}$ or $H^1\cap{\mathcal K}^+$.
By $H^1_r$ we denote the set of radial functions in $H^1$
and by $\|\cdot\|$ the norm in $H^1(\Omega,\R^2)$.
We also set
\begin{equation} \label{FG}
\begin{aligned}
 {\mathcal U} &:=(u,v), \\
{\mathcal F}({\mathcal U}) &:=(|u|^{2q+2}u+\beta|v|^{q+2}|u|^qu,|v|^{2q+2}v+\beta|u|^{q+2}|v|^qv), \\
G({\mathcal U}) &:=\frac1{p+1}{\mathcal F}({\mathcal U})\cdot {\mathcal U} \quad\hbox{ (hence $\nabla G={\mathcal F}$)}.
\end{aligned}
\end{equation}

\begin{proposition} \label{propCL}
Assume $\Omega=\R^n$ or $\Omega=B_R$, $n\leq3$, $\beta\ne-1$, $\lambda>0\geq\gamma$, $p=2q+3\in[3,p_S)$.
If $\Omega=\R^n$, then assume also $\lambda+\gamma>0$. 
Let ${\mathcal U}_0\in H^1_r(\Omega,\R^2)$.
If $\gamma=0$ or $\gamma<0$, then assume also
${\mathcal U}_0\in{\mathcal K}$ or ${\mathcal U}_0\in{\mathcal K}^+$,
respectively.
Assume that the solution of \eqref{Schr-lambda}
with initial data ${\mathcal U}(\cdot,0)={\mathcal U}_0$ is global
and satisfies $|E(t)|\leq C_E$ for $t>0$, where
$$E(t):=\frac12\int_\Omega(|\nabla {\mathcal U}(x,t)|^2+\lambda |{\mathcal U}(x,t)|^2)\,dx
 +\gamma\int_\Omega(uv)(x,t)\,dx
-\int_\Omega G({\mathcal U}(x,t))\,dx.$$
Then
\begin{equation} \label{estCL}
 \|{\mathcal U}(\cdot,t)\|\leq C=C(\|{\mathcal U}_0\|,C_E).
\end{equation}
\end{proposition}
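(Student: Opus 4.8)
The plan is to combine the universal $L^\infty$-estimate of Corollary~\ref{corS} with the gradient structure of \eqref{Schr-lambda} and the energy bound $|E(t)|\le C_E$. First I record the preliminaries. Since $\mathcal K$ (when $\gamma=0$), resp.\ $\mathcal K^+\subset\mathcal K(0,0,C_3,0)$ (when $\gamma<0$), is invariant under the semiflow generated by \eqref{Schr-lambda}, the hypothesis on $\mathcal U_0$ gives $\mathcal U(\cdot,t)\in\mathcal K$ for all $t>0$, so Corollary~\ref{corS} provides $C_1=C_1(\beta,\lambda,\gamma)$ with $\|\mathcal U(\cdot,t)\|_\infty\le C_1$ for $t\ge1$. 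Next, $E$ is a strict Lyapunov functional: testing \eqref{Schr-lambda} with $\mathcal U_t$ gives $E'(t)=-\|\mathcal U_t(\cdot,t)\|_2^2\le0$. Because $p<p_S$ we have $H^1(\Omega)\hookrightarrow L^{p+1}(\Omega)$, whence $E(0)\le C(\|\mathcal U_0\|)$; together with $E(t)\ge-C_E$ and the monotonicity of $E$ this yields
$$\int_0^\infty\|\mathcal U_t(\cdot,t)\|_2^2\,dt\le E(0)+C_E=:D,$$
a quantity depending only on $\|\mathcal U_0\|$ and $C_E$. (For $t>0$ the solution is classical with $\mathcal U(\cdot,t)\in H^2\cap L^\infty$, so $\mathcal U_t(\cdot,t)\in L^2(\Omega)$ and the integrations by parts below are legitimate.)

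The heart of the proof is a uniform bound on $\|\mathcal U(\cdot,t)\|_2$ for $t\ge1$. If $\Omega=B_R$ this is immediate: $\|\mathcal U(\cdot,t)\|_2\le|B_R|^{1/2}\|\mathcal U(\cdot,t)\|_\infty\le|B_R|^{1/2}C_1$. If $\Omega=\R^n$ I would use the extra assumption $\lambda+\gamma>0$. Set $Q(t):=\int_\Omega(|\nabla\mathcal U|^2+\lambda|\mathcal U|^2+2\gamma uv)\,dx$ and $N(t):=\int_\Omega\mathcal F(\mathcal U)\cdot\mathcal U\,dx$. Using $\gamma\le0$ one gets the coercivity $Q(t)\ge\min(1,\lambda+\gamma)\|\mathcal U(\cdot,t)\|^2$, and one has $E(t)=\tfrac12 Q(t)-\tfrac1{p+1}N(t)$ and, testing \eqref{Schr-lambda} with $\mathcal U$, $Q(t)-N(t)=-\int_\Omega\mathcal U_t\cdot\mathcal U\,dx$. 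By Chebyshev's inequality the ``bad set'' $\{t\ge1:\|\mathcal U_t(\cdot,t)\|_2>1\}$ has measure at most $D$. At any ``good'' time $t$ (where $\|\mathcal U_t(\cdot,t)\|_2\le1$) the last identity gives $|Q(t)-N(t)|\le\|\mathcal U(\cdot,t)\|_2$, and eliminating $N(t)$ yields $\tfrac{p-1}2 Q(t)\le(p+1)C_E+\|\mathcal U(\cdot,t)\|_2$; combined with the coercivity this is a quadratic inequality forcing $\|\mathcal U(\cdot,t)\|_2\le C''=C''(C_E,\lambda,\gamma,p)$ at every good time. Finally $\tfrac{d}{dt}\|\mathcal U(\cdot,t)\|_2\le\|\mathcal U_t(\cdot,t)\|_2$, and every $t\ge1$ is within distance $D+1$ of a good time or of $t=1$ (where $\|\mathcal U(\cdot,1)\|_2\le\|\mathcal U_0\|_2+D^{1/2}$); integrating and using $\int\|\mathcal U_t\|_2\le(D+1)^{1/2}D^{1/2}$ propagates the bound to all $t\ge1$, so $\|\mathcal U(\cdot,t)\|_2\le C'''(\|\mathcal U_0\|,C_E)$ there.

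With this in hand the rest is routine. For $t\ge1$, the identity $\tfrac12\int_\Omega|\nabla\mathcal U|^2\,dx=E(t)-\tfrac12\int_\Omega(\lambda|\mathcal U|^2+2\gamma uv)\,dx+\int_\Omega G(\mathcal U)\,dx$, together with $\bigl|\int_\Omega G(\mathcal U)\,dx\bigr|\le C\|\mathcal U(\cdot,t)\|_\infty^{p-1}\|\mathcal U(\cdot,t)\|_2^2$ and the bounds already obtained, gives $\|\nabla\mathcal U(\cdot,t)\|_2\le C$, hence \eqref{estCL} on $[1,\infty)$. For $0<t\le1$ I would invoke local well-posedness of \eqref{Schr-lambda} in $H^1$ (available since $p\ge3$ makes the nonlinearity locally Lipschitz and $p<p_S$ keeps it subcritical): there is $\tau=\tau(\|\mathcal U_0\|)>0$ with $\|\mathcal U(\cdot,t)\|\le2\|\mathcal U_0\|$ on $[0,\tau]$; on $[\tau,1]$ Corollary~\ref{corS} bounds $\|\mathcal U(\cdot,t)\|_\infty$ by a constant depending on $\|\mathcal U_0\|$, while $\|\mathcal U(\cdot,t)\|_2\le\|\mathcal U_0\|_2+D^{1/2}$, and feeding these into the energy identity as above closes the estimate on $[0,1]$ too. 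Altogether $\|\mathcal U(\cdot,t)\|\le C(\|\mathcal U_0\|,C_E)$ for all $t\ge0$, with $C$ locally uniform in $\beta\in\R\setminus\{-1\}$ and in $\lambda,\gamma$.

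The main obstacle is the uniform $L^2$-bound on $\R^n$: there the $L^\infty$-estimate carries no spatial decay, and a naive Gronwall estimate for $\tfrac{d}{dt}\|\mathcal U\|_2^2$ fails because $C_1$ need not be small relative to $\lambda+\gamma$. What rescues it is the finiteness of $\int_0^\infty\|\mathcal U_t\|_2^2\,dt$ coming from the energy bound: at the (abundant) times where $\mathcal U_t$ is small the solution nearly satisfies the stationary Nehari relation $Q=N$, which together with the energy bound and coercivity controls $\|\mathcal U(\cdot,t)\|_2$ at such times, and the measure estimate on the bad set then interpolates this to all times. Some additional care is needed for the parabolic-regularity facts used above and for the continuity of $E$ up to $t=0$.
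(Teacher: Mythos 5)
Your proof is correct, and it takes a genuinely different route from the paper's in the crucial step of converting the averaged information into a pointwise $H^1$ bound. Both proofs start from the same ingredients — the universal $L^\infty$ estimate of Corollary~\ref{corS}, the monotone energy giving $\int_0^\infty\|\mathcal U_t\|_2^2\,dt\leq C$, and the identity obtained by testing the system with $\mathcal U$ (your relation $Q-N=-\int_\Omega\mathcal U_t\cdot\mathcal U$ is exactly what underlies the paper's differential inequality \eqref{dU2}) — but they diverge afterwards. The paper integrates \eqref{dU2} over unit time intervals, uses Cauchy--Schwarz and the quadratic self-bounding to obtain the \emph{time-averaged} $H^1$ estimate \eqref{CL}, extracts from each interval $[k-1,k]$ a time $t_k$ where $\|\mathcal U(\cdot,t_k)\|\leq C$, and then propagates forward via the \emph{linear parabolic smoothing estimate} \eqref{est2} (i.e.\ writing $\mathcal U_t=\Delta\mathcal U-\lambda\mathcal U+H\mathcal U$ with $\|H\|_\infty$ bounded and invoking variation-of-constants). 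You instead apply the identity \emph{pointwise} at ``good'' times where $\|\mathcal U_t\|_2\leq1$, selected by Chebyshev from the finiteness of $\int\|\mathcal U_t\|_2^2\,dt$; the Nehari-type balance plus the energy bound plus the coercivity of $Q$ (using $\lambda+\gamma>0$ on $\mathbb R^n$) then give a pointwise $L^2$ bound at good times, which you propagate to all times via $\frac{d}{dt}\|\mathcal U\|_2\leq\|\mathcal U_t\|_2$, and you recover $\|\nabla\mathcal U\|_2$ pointwise from the energy identity together with $\bigl|\int G(\mathcal U)\bigr|\leq C\|\mathcal U\|_\infty^{p-1}\|\mathcal U\|_2^2$. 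The upshot is that your route bypasses the linear parabolic regularity step entirely in favour of a purely variational/ODE propagation; the paper's route is slightly more compact but relies on the smoothing machinery in \eqref{est2}, whereas yours makes visible how the uniform $L^\infty$ bound feeds directly into the energy identity. Both are locally uniform in $\beta,\lambda,\gamma$ for the same reasons.
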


The $H^1$-estimate in Proposition~\ref{propCL} is based on the universal $L^\infty$-estimates
in Corollary~\ref{corS}, but the universality of those estimates is not needed: 
It would be sufficient to use $L^\infty$-estimates which can depend on $\|{\mathcal U}_0\|$ and $C_E$,
and such estimates could likely be obtained directly from the elliptic Liouville theorem (Theorem~\ref{thmSchrL})
by using the approach in \cite{G} (hence we would not need the parabolic Liouville theorems in \cite{Q-LSP}).
On the other hand, universal $L^\infty$-estimates
as in Corollary~\ref{corS} also enable one to prove the existence of periodic solutions
of related problems with time-periodic coefficients, for example (see \cite[Section~6]{BPQ}),
and such results cannot be obtained by using the weaker estimates depending
on $\|{\mathcal U}_0\|$ and $C_E$.

As already mentioned,  the authors of \cite{LW,WW} use the 
properties of the parabolic semiflow in order to prove
the existence and multiplicity of nontrivial radial solutions
of \eqref{Fstat} with $n\leq3$, $\lambda>0$ and $q=\gamma=0$.
More precisely, paper \cite{WW} deals with positive radial solutions,
$\Omega=B_R$ and $\beta\leq-1$,
and paper \cite{LW} with nodal radial solutions of various generalizations of \eqref{Fstat}
and $\beta<0$ (or $\beta<\beta_0$, where $\beta_0>0$ is small enough).
In both papers, a priori estimates of suitable global solutions of \eqref{Schr-lambda} play an important role.
If we consider initial data $U_0\in{\mathcal A}$, where
${\mathcal A}$ is the domain of attraction of the zero solution,
then the solution of \eqref{Schr-lambda} is global and the corresponding energy function $E(t)$ is bounded,
hence estimate \eqref{estCL} is true (provided the remaining assumptions
in Proposition~\ref{propCL} are satisfied).
Estimate \eqref{estCL} then also guarantees that the solutions of \eqref{Schr-lambda} 
with initial data $U_0\in\partial{\mathcal A}$ are global
and satisfy \eqref{estCL}, and these particular global solutions
are used in \cite{LW,WW} in order to find solutions of \eqref{Fstat}
with prescribed number of nodal domains or intersections.
The arguments in \cite{WW} also require some compactness of those particular global solutions, 
and such compactness is guaranteed by the next proposition.
 
\begin{proposition} \label{prop-comp}
Let the assumptions of Proposition~\ref{propCL} be satisfied.
If $\Omega=\R^n$, then assume also that ${\mathcal U}_0$ is compactly supported and $n\geq2$.
Then the trajectory $t\in[0,\infty)\to H^1_r(\Omega,\R^2):t\mapsto {\mathcal U}(\cdot,t)$ is compact.
\end{proposition}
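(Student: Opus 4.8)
The plan is to show that the orbit $\{\mathcal{U}(\cdot,t):t\ge0\}$ has compact closure in $H^1_r(\Omega,\R^2)$. Since $t\mapsto\mathcal{U}(\cdot,t)$ is continuous from $[0,\infty)$ into $H^1$, the part with $0\le t\le2$ is already compact, so it suffices to prove that $\{\mathcal{U}(\cdot,t):t\ge2\}$ is precompact in $H^1$. Two uniform bounds come for free: Proposition~\ref{propCL} gives $\|\mathcal{U}(\cdot,t)\|\le C$ for all $t$, and, since $\mathcal{K}$ (resp.\ $\mathcal{K}^+\subset\mathcal{K}$) is invariant under the semiflow when $\gamma=0$ (resp.\ $\gamma<0$), Corollary~\ref{corS} gives $\|\mathcal{U}(\cdot,t)\|_\infty\le C$ for $t\ge1$, with $C=C(\|\mathcal{U}_0\|,C_E)$.

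The first step is to upgrade these to a uniform $H^2$-bound for $t\ge2$. Since $p=2q+3\ge3$, the map $\mathcal{F}$ is of class $C^1$; being positively homogeneous of degree $p$, it obeys $|\mathcal{F}(\mathcal{U})|\le C|\mathcal{U}|^p$ and $|D\mathcal{F}(\mathcal{U})|\le C|\mathcal{U}|^{p-1}$, so the uniform $L^\infty$- and $H^1$-bounds on $\mathcal{U}$ imply that $\mathcal{F}(\mathcal{U}(\cdot,t))$ is bounded in $H^1(\Omega)$ uniformly for $t\ge1$. Writing the variation-of-constants formula from time $t-1$ for the analytic semigroup $e^{s\mathcal{L}}$ generated by $\mathcal{L}:=\Delta-\lambda-\gamma(\cdot)^\perp$ (where $\mathcal{U}^\perp:=(v,u)$; in the variables $u\pm v$ the operator $\mathcal{L}$ decouples into two heat operators with zeroth-order coefficients $\lambda\pm\gamma>0$) and using the smoothing estimate $\|e^{s\mathcal{L}}g\|_{H^2}\lesssim s^{-1/2}\|g\|_{H^1}$ for $0<s\le1$, one obtains a uniform bound on $\mathcal{U}(\cdot,t)$ in $H^2(\Omega)$ for $t\ge2$. (Alternatively one may use the dissipation identity $\frac{d}{dt}E=-\|\mathcal{U}_t\|_2^2$, the resulting bound $\int_1^\infty\|\mathcal{U}_t(\cdot,s)\|_2^2\,ds\le2C_E$, the equation satisfied by $\mathcal{U}_t$, and the uniform Gronwall lemma to bound $\mathcal{U}_t(\cdot,t)$ in $L^2$, followed by interior elliptic regularity.) If $\Omega=B_R$ this finishes the proof, since $H^2(B_R)\hookrightarrow H^1(B_R)$ is compact.

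For $\Omega=\R^n$ (with $\mathcal{U}_0$ compactly supported and $n\ge2$) one needs in addition tightness: $\sup_{t\ge2}\|\mathcal{U}(\cdot,t)\|_{H^1(\{|x|>\rho\})}\to0$ as $\rho\to\infty$. The crucial role of $n\ge2$ is that the radial Sobolev (Strauss) inequality and the uniform $H^1$-bound give $|\mathcal{U}(x,t)|\le C|x|^{-(n-1)/2}$, uniformly in $t$; by the homogeneity of $\mathcal{F}$ this yields $|\mathcal{F}(\mathcal{U}(x,t))|\le\delta(|x|)\,|\mathcal{U}(x,t)|$ with $\delta(r)\to0$ as $r\to\infty$. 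Fix $\rho_0$ with $\delta(\rho_0)<\mu:=\min\{\lambda+\gamma,\lambda-\gamma\}$ (here $\lambda>0$ and $\lambda+\gamma>0$ are used). On $\{|x|>\rho_0\}$ the function $\Phi:=|u+v|+|u-v|$ is then a subsolution of a linear parabolic equation with zeroth-order coefficient $\ge\mu-\delta(\rho_0)>0$, while, because $\mathcal{U}_0$ is compactly supported, a crude comparison on bounded time intervals shows that $\mathcal{U}(\cdot,t)$ has Gaussian spatial decay uniformly for $t$ in bounded sets; in particular $\Phi(\cdot,2)$ decays exponentially in space. Comparing $\Phi$ on $\{|x|>\rho_0\}\times[2,\infty)$ with the solution of that linear equation with initial data $\Phi(\cdot,2)$ and the constant boundary value $\|\mathcal{U}\|_\infty$ on $\{|x|=\rho_0\}$, and subtracting off its stationary part (which decays like $e^{-c|x|}$), one gets $\|\mathcal{U}(\cdot,t)\|_{L^2(\{|x|>\rho\})}\le\eta(\rho)+Ce^{-\mu(t-2)/2}$ for $t\ge2$, $\rho\ge\rho_0$, with $\eta(\rho)\to0$ as $\rho\to\infty$; combined with the bounded-time estimate this gives $\sup_{t\ge2}\|\mathcal{U}(\cdot,t)\|_{L^2(\{|x|>\rho\})}\to0$. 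Finally, the equation $-\Delta\mathcal{U}=-\mathcal{U}_t-\lambda\mathcal{U}-\gamma\mathcal{U}^\perp+\mathcal{F}(\mathcal{U})$, the uniform $H^2$-bound, and the analogous $L^2$-tightness of $\mathcal{U}_t$ (obtained from the linearized equation by the same argument) upgrade this to $H^1$-tightness; since the uniform $H^2$-bound also gives local strong $H^1$-compactness on every ball, a diagonal argument over balls of radius $\rho\in\mathbb{N}$ produces from any sequence a subsequence converging in $H^1(\R^n,\R^2)$. Hence the trajectory is precompact.

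The main obstacle is the tightness on $\R^n$: ruling out loss of compactness at spatial infinity uniformly in time, which requires reconciling the bounded-time estimate (whose constant degrades as the time horizon grows) with the large-time behaviour. This is exactly where the two additional hypotheses are essential — compact support of $\mathcal{U}_0$ controls the tails on bounded time intervals, while $n\ge2$, via the radial Sobolev inequality, makes the nonlinearity effectively small far from the origin, so that the positive damping $\mu=\min\{\lambda+\gamma,\lambda-\gamma\}>0$ keeps the tails uniformly small for all times.
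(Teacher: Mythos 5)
Your proposal is correct in outline and reaches the same conclusion, but the route through the $\R^n$ case is genuinely different from (and noticeably heavier than) the paper's. The paper never establishes tightness in $H^1$ directly. Instead it builds an explicit radial supersolution $M(r)=\delta e^{-\eps(r-R)}$ and uses the Strauss radial lemma, the uniform $H^1$-bound from Proposition~\ref{propCL}, and the compact support of ${\mathcal U}_0$ to place the whole orbit below $M$ for $r\ge R$; this yields a uniform $L^1(\R^n)$-bound on the trajectory. Combined with the Strauss--Lions compact embedding $H^1_r\hookrightarrow L^s$ for $2<s<p_S$ and the interpolation $\|\cdot\|_2\le\|\cdot\|_1^\theta\|\cdot\|_s^{1-\theta}$, this gives precompactness of the orbit in $L^2$; a single smoothing step (the map ${\mathcal U}(\cdot,t)\mapsto{\mathcal U}(\cdot,t+1)$ is continuous $L^2\to H^1$ thanks to the uniform $L^\infty$-bound of Corollary~\ref{corS}) then upgrades this to $H^1$-precompactness. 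Your argument replaces the $L^1$-bound/compact-embedding/interpolation chain by an $H^2$-bound plus a direct proof of $L^2$- and then $H^1$-tightness of the tails via a Kato-type subsolution $\Phi=|u+v|+|u-v|$, damping $\mu=\min\{\lambda+\gamma,\lambda-\gamma\}>0$, and a diagonal extraction over balls. Both proofs use exactly the same three structural ingredients (compact support of ${\mathcal U}_0$, the radial Sobolev decay $|{\mathcal U}(x,t)|\lesssim|x|^{(1-n)/2}$ which requires $n\ge2$, and a comparison/supersolution step to propagate spatial decay in time), but the paper's packaging of the final compactness through $L^1\cap L^s\Rightarrow L^2$ plus one smoothing step is considerably shorter, and it avoids the $H^2$-bound entirely. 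Two minor remarks on your version: the detour through $L^2$-tightness of ${\mathcal U}_t$ is unnecessary (once you have $L^2$-tightness of ${\mathcal U}$ and a uniform $H^2$-bound, the interpolation $\|\nabla(\chi_\rho{\mathcal U})\|_2^2\le\|\chi_\rho{\mathcal U}\|_2\|\Delta(\chi_\rho{\mathcal U})\|_2$ already gives $H^1$-tightness), and the claim that $\Phi(\cdot,2)$ has Gaussian decay is heavier than needed — exponential decay, which is what the comparison actually produces, is enough.
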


The proof in \cite{WW} guaranteeing the existence of
positive solutions of \eqref{Fstat} with prescribed number of intersections
required $\Omega=B_R$, $p=3$, and the authors of \cite{WW} also assume $\gamma=0$. 
Propositions~\ref{propCL} and \ref{prop-comp} enable one to
prove analogous results also for $\Omega=\R^n$ and $p\in[3,p_S)$.
In addition, one can also consider the case $\gamma<0$: If $\Omega=B_R$,
then in order to guarantee the stability of the zero solution, 
one has to assume $\lambda+\gamma>-\lambda_1$,
where $\lambda_1$ is the first eigenvalue of the negative Dirichlet
Laplacian in $\Omega$.

Similarly, Proposition~\ref{propCL} indicates
that many arguments from \cite{LW} guaranteeing the existence
of solutions of \eqref{Fstat} with prescribed number
of nodal domains in the cubic case $p=3$
can also be used if $p\in(3,p_S)$.

\begin{remark} \label{remTech} \rm
(i) The proofs of Liouville theorems used in \cite{BDW,DWW,DTZ} heavily
depend on the choice $p=3$: 
The arguments in those proofs cannot be used if $n=3$ and $p>3$, for example.

(ii) The bounds of global solutions of  \eqref{Schr-lambda}
in \cite{WW,LW} are proved by integral estimates (cf.~\cite{CL})
which require $p:=2q+3<p_{CL}:=(3n+8)/(3n-4)$.
Condition $p<p_{CL}$ can likely be improved to $p<p_S$ by a bootstrap argument due to
\cite{Q-AMUC} (see also \cite{Q-HJM} or \cite{GMS,HZ} for applications of this
argument to more general or rescaled problems), but only if $\beta>-1$. 
If $\beta\leq-1$, then a modification
of that bootstrap argument could likely improve the condition
$p<p_{CL}$ slightly if $n=3$ (to $p<p_{CL}+1/5$), but not for $n=2$, cf.~\cite{ABKQ}.
Our results guarantee that the required a priori estimates
remain true for any $p\in[3,p_S)$ if $n\leq3$ and $\beta\ne-1$. 

On the other hand, if $p=3$, $n\le3$, $k$ is a fixed positive integer, $\lambda>0\geq\gamma$ and $\lambda+\gamma>0$,
then the integral estimates in \cite{WW} of suitable global positive solutions $(u,v)$ of \eqref{Schr-lambda} 
satisfying $z(u-v)\leq k$  are locally uniform for $\beta\in\R$.
(In fact, \cite{WW} deals with $\gamma=0$, $\beta\leq-1$ and $\Omega=B_R$ only, 
but these assumptions are not needed for such estimates.)  
Assume that the $\omega$-limit set of such global solution $(u,v)$ contains a positive stationary
solution of the form $(u^*,u^*)$. Since the norms of such positive stationary
solutions tend  to $\infty$ as $\beta\to-1$, this would yield a contradiction if $\beta$ is close to $-1$.
Consequently, the topological arguments in the proof of \cite[Theorem 1.1]{WW} 
leading to the existence of stationary solutions satisfying $z(u-v)=k$
can be used whenever $\beta<-1+\eps_k$, where $\eps_k>0$ is small enough.
Our bounds based on Liouville theorems are locally uniform with respect to $\beta$
only for $\beta\in\R\setminus\{-1\}$, hence such arguments cannot be used. 
The reason is that we are using the universal estimates in Corollary~\ref{corS}
which are true for all solutions in ${\mathcal K}$ including solutions
of the form $(u,u)$, hence they cannot be uniform as $\beta$ approaches $-1$.
\end{remark}

\section{Proofs} 
\label{sec-Schr}

\begin{proof}[Proof of Theorem~\ref{thmSchrL}] 
Due to scaling and doubling arguments (see \cite{PQS1}), 
we only have to prove the nonexistence for bounded solutions. 
Assume that $(u,v)\in{\mathcal K}$ is a nontrivial bounded radial solution
of \eqref{FstatL}
and $(u,v)\ne(c,\pm c)$ if $\beta=-1$.
Consider $u,v$ as functions of the radial variable $r=|x|$,
$\Delta u(r)=u''(r)+\frac{n-1}ru'(r)$.
System \eqref{FstatL} possesses nontrivial radial solutions of the form
$W_0:=(w,\pm w)$ or $W_1:=(w,0)$ or $W_2:=(0,w)$,
where $z(w)<\infty$, only if $\beta=-1$, and such solutions are of the form $(c,\pm c)$ with $c\ne0$
(see \cite[Theorem 2.2]{Ni} in the case of $W_1,W_2$ or
$W_0$ and $\beta>-1$, and see \cite[Proposition~4]{Q-JDE} in the case of $W_0$ and $\beta<-1$),
hence we have $u\not\equiv v$, $u\not\equiv-v$, $u\not\equiv0$ and $v\not\equiv0$. 
Replacing $u$ by $-u$ and/or $v$ by $-v$ if necessary, 
we may assume that there exists $R_0\geq 0$ such that
\begin{equation} \label{uv0}
u(r)>v(r)>0 \ \hbox{ for }\ r>R_0.
\end{equation} 

Assume first $n\leq2$ or $n=3$ and $p=2q+3\leq3$.
Set $w:=u-v$ if $\beta\leq0$, and $w:=u$ otherwise.
If $r>R_0$, then $w(r)>0$ and $-\Delta w \geq w^p$, which
contradicts the corresponding Liouville-type theorem for inequalities
in exterior domains, see \cite{BVP}, for example.
The same argument applies to (possibly nonradial) solutions of \eqref{FstatL} in $\R^1$,
and to solutions of  \eqref{FstatL-half}. 
Consequently, we just have to prove the nonexistence of bounded radial solutions of \eqref{FstatL}
satisfying \eqref{uv0} in the case $n=3$ and $p=2q+3\in(3,5)$.

Theorem~\ref{thmSchrL} for $n=1$ (which we have just proved)
together with scaling and doubling arguments (see \cite{BPQ}, for example)
imply 
\begin{equation} \label{decay}
 |u(r)|+|v(r)|+r(|u'(r)|+|v'(r)|)\leq C^*r^{-2/(p-1)},\quad r>0.
\end{equation}
If $\beta>-1$, then 
\begin{equation} \label{Gp}
C_1|{\mathcal U}|^{p+1}\leq G({\mathcal U})\leq C_2|{\mathcal U}|^{p+1} \ \hbox{ for any }\ {\mathcal U}=(u,v),
\end{equation}
where $G$ is defined in \eqref{FG}.
In addition, the Rellich-Pohozaev identity \cite[Lemma~3.6]{QS9}
(which is true also for nodal solutions) implies
\begin{equation} \label{RP}
\int_0^R c_pG({\mathcal U}(r))r^2\,dr = R^3\bigl(2G({\mathcal U}(R))+|{\mathcal U}'(R)|^2+\frac1R {\mathcal U}(R)\cdot {\mathcal U}'(R)\bigr),
\end{equation}
where $c_p:=5-p>0$. 
Now \eqref{RP}, \eqref{Gp} and \eqref{decay} imply
$$\int_0^R|{\mathcal U}(r)|^{p+1}r^2\,dr\leq CR^{-\frac{5-p}{p-1}} \ \to\ 0\ \hbox{ as }\ R\to\infty,$$
which yields a contradiction.

It remains to consider the case $n=3$, $p=2q+3\in(3,5)$ and $\beta\le-1$.
Our arguments in this case are inspired by the proof of \cite[Theorem~2.5]{Ni}. 
In the rest of the proof we denote 
$U(r):=r^{2/(p-1)}u(r)$, $V(r):=r^{2/(p-1)}v(r)$. 
Then \eqref{decay} guarantees
\begin{equation} \label{Cstar}
 |U(r)|+|V(r)|\leq C^*,\quad r|U'(r)|+r|V'(r)|\leq 2C^*, \quad r>0.
\end{equation}
If $Z\in\{U,V\}$, then $Z$ solves the equation
\begin{equation} \label{Z}
r^2Z''+arZ'-bZ+F(Z)=0, 
\end{equation}
where 
$$ a=\frac{2(p-3)}{p-1}\in(0,1),\qquad
  b=\frac{2(p-3)}{(p-1)^2}\in(0,\frac14), $$
and 
$$F(Z)=\begin{cases}
      |U|^{p-1}U+\beta|V|^{q+2}|U|^qU &\hbox{ if $Z=U$},\\
      |V|^{p-1}V+\beta|U|^{q+2}|V|^qV &\hbox{ if $Z=V$}.
 \end{cases}
$$
Set also
$$ E:=-\frac b2(U^2+V^2)+\frac1{p+1}(|U|^{p+1}+|V|^{p+1})+\frac{2\beta}{p+1}|UV|^{(p+1)/2},$$
$$ \varphi:=(U')^2+(V')^2.$$
Multiplying \eqref{Z} with $Z=U$ or $Z=V$ by $U'$ or $V'$, respectively,
and adding the resulting equations we obtain
\begin{equation} \label{Ephi}
 \frac12 r^2\varphi'(r)+ar\varphi(r)+E'(r)=0,
\end{equation}
and integration by parts yields
\begin{equation} \label{Ephiint}
 \frac12\bigl(\rho^2\varphi(\rho)-r^2\varphi(r)\bigr)-(1-a)\int_{r}^{\rho}s\varphi(s)\,ds+E(\rho)-E(r)=0,\quad \rho>r.
\end{equation}

If $r>R_0$, then \eqref{uv0} and $\beta\le-1$ imply $F(V(r))\leq 0$.
Assume
\begin{equation} \label{Vincr}
 V'(r_0)\geq0\ \hbox{ for some }\ r_0>R_0. 
\end{equation}
Then $V'>0$ on $(r_0,\infty)$, since  $V''>0$ whenever $V'=0$.
Fix $r_1>r_0$ and set 
$$\eps:=\min(bV(r_1),ar_1V'(r_1))>0.$$ 
If $ar_2V'(r_2)<\eps$ for some $r_2>r_1$,
then set $r_3:=\inf\{r<r_2:a\rho V'(\rho)<\eps\hbox{ on }[r,r_2]\}$ and notice that $r_3\in[r_1,r_2)$,
$ar_3V'(r_3)=\eps$
and $bV(r)>bV(r_1)\geq\eps$ for $r>r_1$. These estimates, \eqref{Z} and $F(V)\leq 0$
guarantee $V''>0$ on $(r_3,r_2)$, hence $ar_2V'(r_2)>ar_3V'(r_3)=\eps$ which yields a contradiction.
Consequently, $arV'(r)\geq\eps$ for $r>r_1$,
which contradicts the boundedness of $V$.
Thus \eqref{Vincr} fails and we have
$V'<0$ on $(R_0,\infty)$. 

If $V_\infty:=\lim_{r\to\infty}V(r)>0$, then \eqref{Z} implies $r^2V''(r)>bV_\infty/2=:c_V$  for $r>r_4$,
hence 
considering $R\to\infty$ in the estimate
$$
-V'(r)>V'(R)-V'(r)=\int_r^R V''(\rho)\,d\rho>c_V\int_r^R \frac1{\rho^2}\,d\rho=c_V\Bigl(\frac1r-\frac1R\Bigr)
$$
we obtain
 $V'(r)\leq-c_V/r$ for $r>r_4$, which contradicts the boundedness of $V$.
Thus $V_\infty=0$ and $q>0$ implies $F(V(r))=o(V(r))$ as $r\to\infty$.  
Consequently, there exists a positive nonincreasing function
$f$ such that $f(r)\to0$ as $r\to\infty$ and
$$r^2V''(r)+arV'(r)\in(0,f(r))\ \hbox{ for $r$ large}. $$
Assume $(1-a)rV'(r)<-f(r)$ for some $r$ large.
Then $r(rV'(r))'=r^2V''(r)+rV'(r)<r^2V''(r)+arV'(r)-f(r)<0$,
hence $(1-a)\rho V'(\rho)<-f(r)\leq-f(\rho)$ for $\rho>r$.
The inequality $|V'(\rho)|>\frac{f(r)}{1-a}\frac1\rho$ contradicts the boundedness of $V$.
Hence
\begin{equation} \label{rVr}
V(r)+r|V'(r)|=o(1) \ \hbox{ as }\ r\to\infty.
\end{equation}

Fix $M:=e^2$, $\eps_k\searrow0$ and choose $R_k\nearrow\infty$ such that $R_1>R_0$ and 
\begin{equation} \label{Vsmall}
V(r)+r|V'(r)|<\eps_k \quad\hbox{ for }r\geq R_k.
\end{equation}
We have two possibilities:

Case A: $(\forall k)\,(\exists r_k\geq R_k)\, 0<U\leq\eps_k$ on $[r_k,Mr_k]$.

Case B: $(\exists k_0)\,(\forall r\geq R_{k_0})\,(\exists \tilde r\in[r,Mr])\, U(\tilde r)>\eps_{k_0}$.

Consider Case A first. If $r^2\varphi(r)\geq2\eps_k^2$ on $J_k:=[r_k,Mr_k]$,
then \eqref{Vsmall} implies $r|U'(r)|\geq\eps_k$ on $J_k$, hence
$$ \eps_k\geq|U(Mr_k)-U(r_k)|=\Big|\int_{J_k}U'(r)\,dr\Big|\geq\int_{J_k}\frac{\eps_k}r\,dr=2\eps_k, $$
which yields a contradiction. Consequently, 
there exists $\tilde R_k\in J_k$ such that $\tilde R_k^2\varphi(\tilde R_k)<2\eps_k^2$.
Since $U(\tilde R_k),V(\tilde R_k)\to0$, we have
\begin{equation} \label{tildeA}
E(\tilde R_k)\to 0, \quad \tilde R_k^2\varphi(\tilde R_k)\to0,\quad \tilde R_k\to\infty.
\end{equation}

Next consider Case B. Set $\eps^*:=\eps_{k_0}$, $R^*:=R_{k_0}$,
$I_k:=[M^{k-1}R^*,M^kR^*]$, $k=1,2,\dots$.
For each $k$ there exists $\tilde r_k\in I_k$ such that $U(\tilde r_k)\in[\eps^*,C^*]$.
Set
$$u_k(\rho):=\tilde r_k^{2/(p-1)}u(\tilde r_k\rho),
  \quad v_k(\rho):=\tilde r_k^{2/(p-1)}v(\tilde r_k\rho),\quad \rho>R_0/\tilde r_k. $$
Then $u_k,v_k>0$ are locally bounded, $v_k\to0$ locally uniformly,
$u_k(1)=U(\tilde r_k)\in[\eps^*,C^*]$ and
$$ 0=\Delta u_k+u_k^p+\beta v_k^{q+2}u_k^{q+1}.$$
Consequently, a subsequence $u_{k_j}$ converges in $C_{loc}$ to a positive
solution $\tilde u$ of $\Delta u+u^p=0$ in $(0,\infty)$.
Fix $m\geq1$ and set $\rho_{j,m}:=\tilde r_{k_j+m}/\tilde r_{k_j}\in[M^{m-1},M^{m+1}]$.
Then 
$$u_{k_j}(\rho_{j,m})=\tilde r_{k_j}^{2/(p-1)}u(\tilde r_{k_j+m})
  = \rho_{j,m}^{-2/(p-1)}U(\tilde r_{k_j+m}) \geq \rho_{j,m}^{-2/(p-1)}\eps^*.$$
Since $u_{k_j}\rightrightarrows\tilde u$ on $[M^{m-1},M^{m+1}]$,
there exists $\rho_m\in[M^{m-1},M^{m+1}]$ such that $\tilde u(\rho_m)\geq \eps^*\rho_m^{-2/(p-1)}$.
Hence  $\limsup_{\rho\to\infty}\tilde u(\rho)\rho^{2/(p-1)}\geq\eps^*$ and
\cite[Remark 9.5]{SPP2} (see also \cite{GS,SZ}) 
shows that
$\tilde u(\rho)=b^{1/(p-1)}\rho^{-2/(p-1)}$.
Consequently, $U(\tilde r_{k_j}\rho)\to b^{1/(p-1)}$, $V(\tilde r_{k_j}\rho)\to 0$  and 
$E(\tilde r_{k_j}\rho)\to E_\infty:=-\frac{p-1}{2(p+1)}b^{(p+1)/(p-1)}$, locally uniformly with respect to $\rho>0$.
Fix $\eps\in(0,-E_\infty)$ and
$0<\rho_1<\rho_2$ such that $\log(\rho_2/\rho_1)>{2C^*\eps^{-1/2}}$, and set $J_j:=(\tilde r_{k_j}\rho_1,\tilde r_{k_j}\rho_2)$.
Assume that
\begin{equation} \label{limsup}
\limsup_{j\to\infty}\inf_{r\in J_j}r^2\varphi(r)\geq\eps.
\end{equation}
Then \eqref{rVr} implies 
$$\limsup_{j\to\infty}\inf_{r\in J_j}r|(U'(r)|\geq \sqrt{\eps/2},$$
hence for suitable $j$ large we obtain $r|U'(r)|\geq\sqrt\eps/2$ on $J_j$
and $|\int_{J_j}U'(r)\,dr|\geq \int_{J_j}\frac{\sqrt\eps}{2r}\,dr>C^*$,
which contradicts \eqref{Cstar}. 
Consequently, \eqref{limsup} fails, hence if $j$ is large, then there exists $\tilde R_j\in J_{j}$ such that 
\begin{equation} \label{tildeB}
\tilde R_j^2\varphi(\tilde R_j)<\eps<-E_\infty,\quad
E(\tilde R_j)\to E_\infty,\quad \tilde R_j\to\infty.
\end{equation}

Notice that $E(0)=0$ and $\lim_{r\to0+}r^2\varphi(r)=0$. In both Case 
A and B, due to \eqref{tildeA} and \eqref{tildeB}, respectively,
we can pass to the limit in \eqref{Ephiint} with $r:=0$ and 
$\rho:=\tilde R_k$ (or $\rho:=\tilde R_j$)
to obtain $\int_0^\infty s\varphi(s)\,ds\leq0$, which yields a contradiction.
\end{proof}

\begin{proof}[Proof of Theorem~\ref{thmUB}]
If $\Omega=\R^n$, then set $R:=\infty$. Radial solutions $(u,v)$ will be considered
as functions of $r:=|x|\in[0,R)$.
 
Assume to the contrary that there exist $\beta_k\in B$ and radial solutions
$(u_k,v_k)\in{\mathcal K}$ (or $\beta_k\in B^*$ and $(u_k,v_k)\in{\mathcal K}^*$)
such that $\|(u_k,v_k)\|_\infty\to\infty$.
Then there exist $r_k\in[0,R)$ such that
$M_k:=M(u_k,v_k)(r_k)\to\infty$, where
$$M(u,v):=|u|^{(p-1)/2}+|v|^{(p-1)/2}+|u'|^{(p-1)/(p+1)}+|v'|^{(p-1)/(p+1)}.$$
The Doubling Lemma in \cite{PQS1} guarantees that we may assume
$$ M(u_k,v_k)\leq 2M_k \ \hbox{ on }\ \{r\in[0,R):|r-r_k|\leq \frac k{M_k.}\}$$
Set $\lambda_k:=1/M_k$.
We may assume that $\beta_k\to\beta$ and also that one of the following three
cases occur:

Case A: $r_k/\lambda_k\to c_0\geq0$.

Case B: $r_k/\lambda_k\to\infty$ and either $R=\infty$ or $(R-r_k)/\lambda_k\to\infty$.

Case C: $R<\infty$ and $(R-r_k)/\lambda_k\to c_R\geq0$.

We set
$$ \tilde u_k(\rho):=\begin{cases}
                \lambda_k^{2/(p-1)}u_k(\lambda_k\rho) & \hbox{ in Case A},\\
                \lambda_k^{2/(p-1)}u_k(r_k+\lambda_k\rho) & \hbox{ in Case B},\\
                \lambda_k^{2/(p-1)}u_k(R-\lambda_k\rho) & \hbox{ in Case C},
        \end{cases} $$ 
and we define $\tilde v_k$ analogously.
We also set
$$ \rho_k:=\begin{cases}  
   0 &  \hbox{ in Case A},\\   r_k/\lambda_k & \hbox{ in Case B},\\  -R/\lambda_k & \hbox{ in Case C},
        \end{cases}
  \qquad \tilde\rho_k:=\begin{cases} 
  r_k/\lambda_k\to c_0  &  \hbox{ in Case A},\\ 0 &  \hbox{ in Case B},\\ (R-r_k)/\lambda_k\to c_R & \hbox{ in Case C}.
          \end{cases}$$
Then
$$ \begin{aligned} 
  \tilde u_k''+\frac{n-1}{\rho+\rho_k}\tilde u_k'-\lambda_k^2(\lambda\tilde u_k+\gamma\tilde v_k)
 &+ |\tilde u_k|^{p-1}\tilde u_k+\beta_k|\tilde v_k|^{q+2}|\tilde u_k|^q\tilde u_k=0, \\
  \tilde v_k''+\frac{n-1}{\rho+\rho_k}\tilde v_k'-\lambda_k^2(\lambda\tilde v_k+\gamma\tilde u_k)
 &+ |\tilde v_k|^{p-1}\tilde v_k+\beta_k|\tilde u_k|^{q+2}|\tilde v_k|^q\tilde v_k=0,
\end{aligned}$$
$M(\tilde u_k,\tilde v_k)(\tilde\rho_k)=1$, and
$M(\tilde u_k,\tilde v_k)(\rho)\leq2$ whenever
$$ \begin{cases} 
   \rho\in[0,R/\lambda_k),\ |\rho-r_k/\lambda_k|\leq k &  \hbox{ in Case A},\\
   \rho\in[-r_k/\lambda_k,(R-r_k)/\lambda_k),\ |\rho|\leq k & \hbox{ in Case B},\\
   \rho\in[0,R/\lambda_k),\ |(R-r_k)/\lambda_k-\rho|\leq k & \hbox{ in Case C}.
\end{cases} $$
Consequently, a subsequence of $(\tilde u_k,\tilde v_k)$ (still denoted
$(\tilde u_k,\tilde v_k)$) converges locally uniformly to a nontrivial solution $(\tilde u,\tilde v)\in{\mathcal K}$
of problem \eqref{FstatL} or \eqref{FstatL} with $n=1$ or \eqref{FstatL-half} in Case~A or~B or~C,
respectively (notice that $(\tilde u,\tilde v)$ is radial in Case~A).

In Case C or if $\beta\ne-1$, then we obtain a contradiction with Theorem~\ref{thmSchrL}.

Assume $\beta=-1$ and consider Case A or B. 
Then Theorem~\ref{thmSchrL} and $M(\tilde u_k,\tilde v_k)(\tilde\rho_k)=1$ guarantee 
$(\tilde u,\tilde v)=(c,\pm c)$, where $c=2^{-2/(p-1)}$.
Replacing $v_k$ by $-v_k$ (and $C_3$ by $C_4$) if necessary, we may assume $(\tilde u,\tilde v)=(c,c)$.
Since $(u_k,v_k)\in{\mathcal K}^*$, we have
$\tilde w_k:=\tilde u_k-\tilde v_k\not\equiv0$ and we also have
$$ 
\begin{aligned}
 & \qquad \tilde w_k''+P_k\tilde w_k'+Q_k\tilde w_k=0, \quad\hbox{where} \\
 &\  P_k:=\frac{n-1}{\rho+\rho_k}, \quad
 Q_k:= \lambda_k^2(\gamma-\lambda) 
 + \frac{|\tilde u_k|^{p-1}\tilde u_k-|\tilde v_k|^{p-1}\tilde v_k}{\tilde u_k-\tilde v_k}
 -\beta_k|\tilde u_k\tilde v_k|^{q}\tilde u_k\tilde v_k.
\end{aligned}
$$
Notice also that $\frac12 P_k'+\frac14 P_k^2=\frac{(n-3)(n-1)}{4(\rho+\rho_k)^2}$.
Fix $R_1>(p-1)^{-1/2}$ and consider $R_2>R_1$ and $\rho\in(R_1,R_2)$.
Since $\beta_k\to-1$ and $\tilde u_k,\tilde v_k\to c$ locally uniformly, we see
that 
$$ \begin{aligned}
q_k: &=Q_k-\frac12 P_k'-\frac14 P_k^2\geq Q_k-\frac1{4\rho^2}\\
 &\to c^{p-1}(p-\beta)-\frac1{4\rho^2}=\frac14(p+1)-\frac1{4\rho^2}>\frac12,
  \end{aligned}$$
where the convergence is uniform for $\rho\in(R_1,R_2)$.
Set $W_k(\rho)=\tilde w_k(\rho) \exp\bigl(\frac12\int_1^\rho P_k\bigr)$.
Then $W_k''+q_kW_k=0$ and $q_k>1/2$ on $(R_1,R_2)$ for $k$ large enough.
Since the solution $W(r)=\sin(\frac{r}{\sqrt2})$ of the equation $W''+\frac12W=0$ has at least $C_3+2$ zeroes in $(R_1,R_2)$
for $R_2$ large enough, the Sturm comparison theorem guarantees that $z(\tilde w_k)=z(W_k)>C_3$ which contradicts
$(u_k,v_k)\in{\mathcal K}^*$ and concludes the proof. 
\end{proof}

\begin{proof}[Proof of Proposition~\ref{propCL}] 
By $C$ we denote various constants which depend only on $\|{\mathcal U}_0\|$ and $C_E$.

Problem \eqref{Schr-lambda} is well posed in $H^1$, hence
there exists $\delta=\delta(\|{\mathcal U}_0\|)\in(0,1)$ such that
\begin{equation} \label{est-delta}
\|{\mathcal U}(\cdot,t)\|\leq C\ \hbox{ for }\ t\in(0,\delta].
\end{equation}
If $\Omega=B_R$, then this estimate and Corollary~\ref{corS} implies
\begin{equation} \label{L2est}
\int_\Omega|{\mathcal U}|^2(x,t)\,dx\leq C, \qquad t\geq0.
\end{equation}

Multiplying the first and the second equation in \eqref{Schr-lambda} by $u$ and $v$, respectively,
integrating by parts, summing the identities and using $\gamma\leq0$ we obtain
\begin{equation} \label{dU2}
 \frac12\frac{d}{dt}\int_\Omega |{\mathcal U}(x,t)|^2\,dx 
 \geq -(p+1)E(t)+\frac{p-1}2\int_\Omega(|\nabla {\mathcal U}(x,t)|^2+(\lambda+\gamma)|{\mathcal U}(x,t)|^2)\,dx.
\end{equation}
We also have 
\begin{equation} \label{Ut2}
 C\geq E(t_1)-E(t_2)=\int_{t_1}^{t_2}\int_\Omega |{\mathcal U}_t|^2\,dx\,dt, \quad t_2>t_1.
\end{equation}
Set 
$$\tilde\lambda:=\begin{cases} \lambda &\hbox{if $\Omega=B_R$},\\
                               \lambda+\gamma &\hbox{if $\Omega=\R^n$},
 \end{cases}$$
and notice that $\tilde\lambda>0$.
Now \eqref{dU2}, \eqref{L2est} and the boundedness of $E$, and then the Cauchy
inequality and \eqref{Ut2} guarantee
$$ \begin{aligned}
 \int_{t}^{t+1}\int_\Omega(|\nabla {\mathcal U}|^2+\tilde\lambda|{\mathcal U}|^2)\,dx\,dt
 &\leq C\Bigl(1+\int_{t}^{t+1}\int_\Omega|{\mathcal U}|\cdot|{\mathcal U}_t|\,dx\,dt\Bigr) \\
 &\leq C\Bigl(1+\Bigl(\int_{t}^{t+1}\int_\Omega|{\mathcal U}|^2\,dx\,dt\Bigr)^{1/2}\Bigr),
\end{aligned}$$
which first shows $\int_{t}^{t+1}\int_\Omega|{\mathcal U}|^2\,dx\,dt\leq C$,
and then 
\begin{equation} \label{CL}
\int_{t}^{t+1}\|{\mathcal U}(\cdot,s)\|^2\,ds\leq C.
\end{equation}

Since ${\mathcal U}$ solves the linear equation ${\mathcal U}_t=\Delta {\mathcal U}-\lambda {\mathcal U}+H{\mathcal U}$, 
where the matrix $H=H(x,t)$
satisfies $\|H(\cdot,t)\|_\infty\leq C$ for any $t\geq\delta$ due to Corollary~\ref{corS},
we have 
\begin{equation} \label{est2}
\|{\mathcal U}(\cdot,t_0+\tau)\|\leq C(\|{\mathcal U}(\cdot,t_0)\|)\ \hbox{ whenever }\ t_0\geq \delta,\ \tau\in[0,2].
\end{equation}
Choosing $t_0=\delta$ in \eqref{est2} and using \eqref{est-delta} we obtain $\|{\mathcal U}(\cdot,t)\|\leq C$ for $t\in[0,2]$. 
Next \eqref{CL} guarantees that for each $k=2,3,\dots$ we can find $t_k\in[k-1,k]$ such that
$\|{\mathcal U}(\cdot,t_k)\|\leq C$ and \eqref{est2} guarantees $\|{\mathcal U}(\cdot,t)\|\leq C$ for $t\in[k,k+1]$.
This concludes the proof.
\end{proof}

\begin{proof}[Proof of Proposition~\ref{prop-comp}] 
If $\Omega=B_R$, then the statement follows from the continuity and boundedness of the trajectory, 
and the smoothing properties of the semiflow generated by \eqref{Schr-lambda}. 
In fact, standard estimates based on the variation of constant formula guarantee that
${\mathcal U}(\cdot,t)$ is bounded in $H^2(B_R,\R^2)$ for $t\geq\delta$, hence the compactness follows from
the compact embedding of $H^2(B_R,\R^2)$ into $H^1(B_R,\R^2)$.

Next let $\Omega=\R^n$, ${\mathcal U}_0$ be compactly supported and $n\geq2$.
It is well known (see \cite{S,L}, for example), that 
$H^1_r=H^1_r(\R^n,\R^2)$ is compactly embedded into $L^s$ if $2<s<p_S$.
It is also easily seen that the function $M(r):=\delta e^{-\eps (r-R)}$, $r>R$,
is a supersolution to problem \eqref{Schr-lambda} for any $R>0$ if $\eps,\delta>0$ are small enough
(where the smallness depends only on $\lambda$ and $\sup_{|{\mathcal U}|=1}|{\mathcal F}({\mathcal U})|$). 
More precisely, if $|{\mathcal U}_0(r)|\leq M(r)$ for $r>R$ and $|{\mathcal U}(R,t)|<M(R)$ for all $t\geq0$, then
$|{\mathcal U}(r,t)|\leq M(r)$ for all $r\geq R$ and $t\geq0$.
Fix such $\eps,\delta$.

Since \cite[Radial Lemma]{S} guarantees 
$|{\mathcal U}(x,t)|\leq C(n)|x|^{(1-n)/2}\|{\mathcal U}(\cdot,t)\|$
and ${\mathcal U}_0$ is compactly supported, we can find $R>0$ such that the support of ${\mathcal U}_0$
is contained in $B_R$ and $|{\mathcal U}(R,t)|<\delta$ for all $t$.
Consequently, we obtain $|{\mathcal U}(r,t)|\leq M(r)$ for all $r\geq R$ and $t\geq0$,
hence the trajectory of ${\mathcal U}$ is bounded in $L^1$.
This fact and the compactness in $L^s$ guarantee the compactness in $L^2$,
and smoothing arguments also prove the compactness in $H^1$. 
In fact, due to Corollary~\ref{corS} one can easily show that
the mapping $L^2\to H^1:{\mathcal U}(\cdot,t)\to {\mathcal U}(\cdot,t+1)$ is continuous.
\end{proof}


\smallskip
{\bf Acknowledgements.} The author thanks Zhi-Qiang Wang for
his helpful comments on paper \cite{LW}.


\end{document}